\newtheorem{theorem}{Theorem}[section]
\newtheorem{lemma}[theorem]{Lemma}
\newtheorem{corollary}[theorem]{Corollary}
\newtheorem{definition}[theorem]{Definition}
\theoremstyle{remark}
\numberwithin{equation}{section}
\def\V{{\mathfrak V}}
\def\X{{\mathfrak X}}
\def\E{{\mathbb{E}}}            
\def\eps{\varepsilon}
\def \Rm {\mathbb R}
\def \Cm {\mathbb C}
\renewcommand{\P}{\mathbb P}
\newcommand{\dint}{\displaystyle\int}
\newcommand{\cout}[1]{}
\newcommand{\mf}{\mathfrak f}
\newcommand{\p}{\mathfrak p}
\newcommand{\mF}{{\mathcal F}}
\newcommand{\mH}{{\mathcal H}}
\newcommand{\LL}{{\mathbb L}}
\newcommand{\rD}{{\rm D}}
\title{Homogenization of elliptic equations with large random potential}
\author{Guillaume Bal
\thanks {Department of Applied Physics and Applied
Mathematics, Columbia University, New York NY, 10027, (gb2030@columbia.edu).}
\and Ningyao Zhang \thanks {Department of Applied Physics and Applied
Mathematics, Columbia University, New York NY, 10027, (nz2164@columbia.edu).}}
\begin{document}
\maketitle
\begin{abstract}
We consider an elliptic equation with purely imaginary, highly heterogeneous, and large random potential with a sufficiently rapidly decaying correlation function. We show that its solution is
well approximated by the solution to a homogeneous equation with
a real-valued homogenized potential as the correlation length of the random medium
$\varepsilon\rightarrow 0$ and estimate the size of the random fluctuations in the setting $d\geq3$.
\end{abstract}

\section{Introduction}
We study the asymptotic behavior of the solution to the equations parameterized by $\varepsilon$
\begin{equation}\label{eq:ell}
    \big(-\Delta+1-iV_\eps\big)u_{\varepsilon}(x)=f(x)
\end{equation}
for $x\in \mathbb{R}^d$ as $\varepsilon\rightarrow 0$ in dimension $d\geq 3$ with $V_{\varepsilon}={\varepsilon}^{-1}V(\frac{x}{\varepsilon})$. Here, $i=\sqrt{-1}$. As a possible application for \eqref{eq:ell}, we may rewrite it as the system
\begin{equation}
\begin{pmatrix}
-\Delta+1 & 0 \\0 & -\Delta+1 \\
\end{pmatrix}
\begin{pmatrix}
u_{1,\varepsilon}\\u_{2,\varepsilon}
\end{pmatrix}
+\begin{pmatrix}
0 & V_{\varepsilon}\\-V_{\varepsilon} & 0
\end{pmatrix}
\begin{pmatrix}
u_{1,\varepsilon}\\u_{2,\varepsilon}
\end{pmatrix}
=
\begin{pmatrix}
f_r\\f_i
\end{pmatrix},
\end{equation}
where we have defined $f=f_r+if_i$ and where $V_{\varepsilon}$ may model the (linear) interaction between two populations represented by the densities $u_{1,\eps}$ and $u_{2,\eps}$. In the absence of interactions, the two populations follow independent diffusions. Assuming that the interaction is modeled by a large, highly oscillatory, random, and mean zero field  $V_\eps$, we wish to understand the limit as the correlation length $\eps\to0$ of such interactions.

It turns out that the limiting behavior of $u_\eps$ depends on the correlation properties of $V$. When the latter decay slowly (of the form $|x|^{-\gamma}$ as $|x|\to\infty$ with $\gamma<2$), we expect $u_\eps$ to converge to the solution of a stochastic partial differential equation; see \cite{B-CMP-09,ZB-CMS-13} for such results in a time-dependent setting.  In dimension $d=1$, we also expect the solution $u_\eps$ to remain stochastic in the limit $\eps\to0$ \cite{PP-GAK-06}. We consider here the setting where the correlation function decays sufficiently rapidly so that $u_\eps$ is expected to converge to a deterministic, homogenized, solution. The main objective of this paper is to present such a convergence result in the setting $d\geq3$ and to provide an optimal rate of convergence when the potential $V$ is assumed to be sufficiently mixing. A similar result, not considered here, is expected to hold in the critical dimension $d=2$ with the strength of the random potential $\eps^{-1}$ in \eqref{eq:ell} replaced by $\eps^{-1}|\ln\eps|^{-\frac12}$ \cite{B-MMS-10}.

The above problems are written on $\Rm^d$ to simplify the presentation. Our convergence result would also hold for a  problem posed on a bounded open domain $X$ with, say, Dirichlet conditions on $\partial X$. The operator $-\Delta+1$ could also be replaced by any operator of the form $-\nabla\cdot a\nabla + b$ with $a$ (as a symmetric tensor) and $b$ sufficiently smooth and bounded above and below by positive constants.

The homogenization of partial differential equations in periodic or random media has a long history; see for instance \cite{A-SP-02,BLP-77,JKO-SV-94}. The homogenization of elliptic equations with random diffusion coefficients was treated in \cite{Koz-MSb-79,PV-RF-81}. Rates of convergence to homogenization in similar settings are proposed in \cite{CS-IM-10,CN-EJP-00,GO-AP-11,Yur-SM-86}.  The homogenization of elliptic and parabolic equations with large random potential has also been studied recently in different contexts. Convergence to stochastic limits is considered in \cite{B-CMP-09,PP-GAK-06,ZB-CMS-13}. Convergence to homogenized solutions is treated in \cite{B-MMS-10,ZB-SD-13} by diagrammatic techniques, in \cite{GB-INV-13,GB-WEAK-13} using probabilistic representations,  and in \cite{HPP-SPDE-13} using a multi-scale method ; see also the review \cite{BG-CMS-14}.


We now present our main hypotheses on the potential $V$ and our main results.

The potential $V(x,\omega)$ is defined, following \cite{PV-RF-81}, on an abstract probability space $(\Omega,\mF,\P)$ with $\V(\omega)$ a bounded measurable function on $\Omega$. We assume the existence of a translation group $\tau_x:\Omega\to\Omega$ for all $x\in\Rm^d$ leaving $\P$ invariant and being ergodic in the sense that for all $A\in\mF$ such that $\tau_xA\subset A$ for all $x\in\Rm^d$, then either $\P(A)=0$ or $\P(A)=1$. Let $\mH=L^2(\Omega,\mF,\P)$. For $\mf\in \mH$ and $x\in\Rm^d$, we define the unitary operator $T_x$ on $\mH$ as $T_x\mf(\omega)=\mf(\tau_{-x}\omega)$. The stationary, bounded, potential $V$ is then defined as $V(x,\omega)=T_x \V(\omega)=\V(\tau_{-x}\omega)$. The group (in $x$) of unitary operators $T_x$ admits a spectral resolution
\begin{displaymath}
   T_x = \dint_{\Rm^d} e^{i\xi\cdot x} U(d\xi)
\end{displaymath}
for $U(d\lambda)$ the associated projection valued measure and the $s$ powers of the (positive) Laplacian $\LL$ are given by
\begin{displaymath}
  \LL^{\frac s2} = \dint_{\Rm^d} |\xi|^s U(d\xi).
\end{displaymath}
Note that for $f(x,\omega)=T_x\mf(\omega)$, we have $(-\Delta)^{\frac s2}f(x,\omega)=T_x \LL^{\frac s2}\mf(\omega)=\LL^{\frac s2}f(x,\omega)$, $dx\times\P-$a.s., where $\Delta$ is the usual (negative) Laplacian in $\Rm^d$.

The correlation function of $\V$ (and $V$) is defined as
\begin{equation}
\label{eq:corr}R(x)=\E\{ \V T_x\V\} = \E\{V(0,\cdot)V(x,\cdot)\}.
\end{equation}
The power spectrum $\hat R(\xi)$ is the (rescaled) Fourier transform of $R$ defined by
\begin{equation}
\label{eq:powerspectrum}
 (2\pi)^d \hat R(\xi) = \dint_{\Rm^d} e^{-i\xi\cdot x} R(x) dx.
\end{equation}
The main assumption we make on the correlation function is that
\begin{equation}
\label{eq:rho}
\rho:= \dint_{\Rm^d} \dfrac{\hat R(\xi)}{|\xi|^2} d\xi<\infty.
\end{equation}
This may be recast by Plancherel as $\rho=\int_{\Rm^d}\Phi(x)R(x)dx$ for $\Phi(x)=\Delta^{-1}\delta$ the fundamental solution to the Laplace equation in dimension $d\geq3$. Such an assumption is satisfied when $R(x)$ decays like $\kappa|x|^{-\gamma}$ as $|x|\to\infty$, or equivalently when $\hat R(\xi)$ behaves as $\kappa'|\xi|^{\gamma-d}$ as $\xi\to0$, with $\gamma>2$.

The bound \eqref{eq:rho} is the main hypothesis we impose on $\V$, beyond stationarity and ergodocity. When the latter fails, for instance when $\gamma<2$ in the above example, then we do not expect $u_\eps$ to converge to a homogenized solution \cite{B-CMP-09,PP-GAK-06,ZB-CMS-13}. For technical reasons, we also need in the convergence result to make some regularity assumptions on $\V$ and assume that $\V_s:=\LL^{\frac s2}\V$ satisfies the same hypothesis as $\V$ for some $s>\frac{d-2}4$. By construction, the power spectrum of $\V_s$ is given by $|\xi|^{2s}\hat R(\xi)$ so we also impose that $|\xi|^{2s-2}\hat R(\xi)$ is integrable. With these hypotheses, we can state the following result
\begin{theorem}
\label{thm:conv}
Let us assume that $V$ is a stationary, bounded, random field such that
\begin{equation}
\label{eq:ctes}
\dint_{\Rm^d}\dfrac{1+|\xi|^{2s}}{|\xi|^2} \hat R(\xi)<\infty \quad \mbox{ for some } \quad s>\frac{d-2}4.
\end{equation}
Then $u_\eps$ the unique solution to \eqref{eq:ell} with $f\in H^{-1}(\Rm^d)$ converges weakly in $H^1(\Rm^d;\mH)$ and strongly in $L^2_{\rm loc}(\Rm^d;\mH)$ to the unique solution of the deterministic equation
\begin{equation}
\label{eq:limit}
   -\Delta u + (1+\rho) u = f,\quad \Rm^d
\end{equation}
 with $\rho$ defined in \eqref{eq:rho}.
\end{theorem}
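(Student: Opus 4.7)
The plan has four stages. First, an a priori energy estimate gives uniform control in $H^1(\Rm^d;\mH)$: testing \eqref{eq:ell} against $\overline{u_\eps}$ and taking real parts, the potential contribution $-i\int V_\eps|u_\eps|^2$ is purely imaginary and drops out, so $\|u_\eps\|_{H^1(\Rm^d;\mH)}\leq\|f\|_{H^{-1}(\Rm^d)}$. Extracting a subsequence yields $u_\eps\rightharpoonup u$ in $H^1(\Rm^d;\mH)$, and by Rellich--Kondrachov, $u_\eps\to u$ strongly in $L^2_{\rm loc}(\Rm^d;\mH)$.

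Second, I would construct a corrector on $(\Omega,\mF,\P)$. The condition \eqref{eq:rho} is exactly the statement $\LL^{-1/2}\V\in\mH$, so the stationary vector field $q:=\nabla_y\LL^{-1}\V\in\mH^d$ is well defined with $\nabla_y\cdot q=-\V$ and $\E|q|^2=\rho$. The scalar $\chi:=\LL^{-1}\V$ need not itself lie in $\mH$, so I regularize by $\chi^\kappa:=(\LL+\kappa)^{-1}\V\in\mH$ for $\kappa>0$ and set $\chi^\kappa_\eps(x,\omega):=\eps\,\chi^\kappa(\tau_{-x/\eps}\omega)$, which yields the key identity
\begin{displaymath}
-\Delta\chi^\kappa_\eps + \kappa\eps^{-2}\chi^\kappa_\eps = V_\eps.
\end{displaymath}

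Third, the heart of the argument: I test the complex weak form of \eqref{eq:ell} against $\bar v=\phi(1+i\chi^\kappa_\eps)$ for real $\phi\in C_c^\infty(\Rm^d)$. Expanding $\nabla\bar v$, integrating by parts on $i\int\phi\,\nabla u_\eps\cdot\nabla\chi^\kappa_\eps$, and using the corrector identity, the large oscillating piece $-i\int V_\eps u_\eps\phi$ cancels exactly, leaving
\begin{displaymath}
\int\nabla u_\eps\cdot\nabla\phi + \int u_\eps\phi + \int V_\eps\chi^\kappa_\eps\,u_\eps\phi = \int f\phi + \mathcal E^{\kappa,\eps},
\end{displaymath}
where $\mathcal E^{\kappa,\eps}$ has three kinds of contributions: (i) terms of order $\eps\|\chi^\kappa\|_\mH$ from $\chi^\kappa_\eps$ itself; (ii) the piece $-i\int u_\eps\nabla\phi\cdot q^\kappa(\cdot/\eps)$ with $q^\kappa:=\nabla_y\chi^\kappa$, vanishing because $q^\kappa(\cdot/\eps)\rightharpoonup 0$ weakly in $L^2_{\rm loc}$ by the ergodic theorem while $u_\eps\nabla\phi\to u\nabla\phi$ strongly; (iii) a term of order $(\kappa/\eps)\|\chi^\kappa\|_\mH$ from the perturbative correction $\kappa\eps^{-2}\chi^\kappa_\eps$. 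For the remaining product, $V_\eps\chi^\kappa_\eps=V(\cdot/\eps)\chi^\kappa(\cdot/\eps)$ is stationary with ergodic mean $\rho_\kappa:=\int\hat R(\xi)/(|\xi|^2+\kappa)\,d\xi\to\rho$, and the ergodic theorem yields $V(\cdot/\eps)\chi^\kappa(\cdot/\eps)-\rho_\kappa\rightharpoonup 0$ weakly in $L^2_{\rm loc}$; combined with the strong convergence $u_\eps\phi\to u\phi$, one obtains $\int V_\eps\chi^\kappa_\eps u_\eps\phi\to\rho_\kappa\int u\phi$. Selecting $\kappa=\kappa_\eps\to 0$ such that $(\kappa_\eps/\eps)\|\chi^{\kappa_\eps}\|_\mH\to 0$, and then using $\rho_{\kappa_\eps}\to\rho$, produces the weak form of \eqref{eq:limit}. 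Uniqueness of $H^1$-solutions of \eqref{eq:limit} upgrades the subsequential limit to convergence of the full family.

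The principal obstacle is the joint limit $\eps\to 0$, $\kappa\to 0$: when $d=3,4$ the norm $\|\chi^\kappa\|_\mH$ diverges as $\kappa\to 0$ (like $\kappa^{-1/4}$ in $d=3$) because the scalar $\chi$ is not in $\mH$ under the bare hypothesis \eqref{eq:rho}, so the three error contributions above must be balanced simultaneously under a delicate tuning of $\kappa_\eps$. This is precisely where the strengthened regularity in \eqref{eq:ctes} intervenes: the condition $s>(d-2)/4$ (equivalently $\int|\xi|^{2s-2}\hat R(\xi)\,d\xi<\infty$) gives improved moment control on the corrector via the auxiliary stationary field $\V_s=\LL^{s/2}\V$, opening a nonempty window of admissible couplings $\kappa_\eps$ and closing the argument uniformly for every $d\geq 3$.
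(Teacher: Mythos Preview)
Your first two stages (the energy estimate, compactness, and the regularized corrector $\chi^\kappa=(\LL+\kappa)^{-1}\V$) are correct and coincide with the paper's setup; indeed the paper simply fixes $\kappa=\eps^2$, so that $\chi^{\eps^2}_\eps=i\psi_\eps$ solves $(-\Delta+1)\psi_\eps+V_\eps=0$, and Lemma~\ref{lem:psieps} gives $\eps\|\chi^{\eps^2}\|_\mH\to0$ under \eqref{eq:rho} alone. In particular your coupling window for (i) and (iii) is already nonempty without invoking~\eqref{eq:ctes}.

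The genuine gap is in your treatment of the product term $\int V(\cdot/\eps)\chi^{\kappa_\eps}(\cdot/\eps)\,u_\eps\phi$. The Birkhoff ergodic theorem applies to a \emph{fixed} stationary function, so the assertion $V\chi^{\kappa_\eps}(\cdot/\eps)-\rho_{\kappa_\eps}\rightharpoonup 0$ is not justified when $\kappa_\eps$ moves with $\eps$. Freezing $\kappa$ makes the ergodic step legitimate but then your error (iii), of size $(\kappa/\eps)\|\chi^\kappa\|_\mH$, blows up as $\eps\to0$. An approximation argument (replace $\chi^{\kappa_\eps}$ by $\chi^{\kappa_0}$) fails because $\chi^\kappa$ is not Cauchy in $\mH$ under \eqref{eq:rho}. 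Crucially, the hypothesis $s>(d-2)/4$ does \emph{not} help here: the extra condition $\int|\xi|^{2s-2}\hat R(\xi)\,d\xi<\infty$ controls large $|\xi|$ (regularity of $V$), whereas the divergence of $\|\chi^\kappa\|_\mH$ is an infrared phenomenon governed by $\hat R$ near $\xi=0$. So your diagnosis of where \eqref{eq:ctes} intervenes is off target.

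The paper avoids this difficulty by taking the \emph{exponential} perturbed test function $\theta_\eps=\theta\,e^{i\psi_\eps}$ rather than your linear $\phi(1+i\chi^\kappa_\eps)$. The algebra then produces, after the same cancellation of $\nabla u_\eps\cdot\nabla\psi_\eps$, the quadratic term $\theta_\eps^*u_\eps|\nabla\psi_\eps|^2$ in place of your $V\chi^\kappa_\eps u_\eps\phi$. This is the decisive gain: $|\nabla\psi_\eps|^2(x)=|\nabla\psi^\eps|^2(x/\eps)$ depends only on the \emph{gradient} of the corrector, which by Lemma~\ref{lem:psieps} converges in $\mH$ to a fixed stationary field $\Xi$. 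The role of $s>(d-2)/4$ is exactly to upgrade $\nabla\psi^\eps\to\Xi$ to $L^{2q}_{\rm loc}$ with $q>\frac{2d}{d+2}$ (Corollary~\ref{cor:psieps}), so that $|\nabla\psi^\eps|^2\to|\Xi|^2$ in $L^q_{\rm loc}$ and the ergodic theorem can be applied to the \emph{fixed} function $|\Xi|^2$, paired against $\theta_\eps^*u_\eps\to\theta u$ which is strong in the dual $L^{q'}\subset L^p$, $p<\frac{2d}{d-2}$. Your linear test function can be salvaged by one further integration by parts, writing $V_\eps\chi^\kappa_\eps=-\nabla\cdot(\chi^\kappa_\eps\nabla\chi^\kappa_\eps)+|\nabla\chi^\kappa_\eps|^2+\kappa\eps^{-2}(\chi^\kappa_\eps)^2$, but this just reproduces the paper's quadratic term and requires the $s$--hypothesis for the same reason; as written, your argument does not close.
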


When the decay rate of the correlation function $R$ is sufficiently large and $\V$ satisfies additional technical assumptions, then we obtain an optimal rate of convergence of $u_\eps$ to $u$ in $H^1(\Rm^d;\mH)$. More precisely, we now assume that $V$ is bounded $\P-$a.s. (although this specific bound does not appear in subsequent estimates), that $R(x)\in L^1(\Rm^d)$ and that for all $(x_1,x_2,x_3,x_4)\in (\Rm^d)^4$,
\begin{equation}
\label{eq:estim4th}
\big| \E\{ \prod_{j=1}^4 V(x_j)\} - R(x_1-x_2)R(x_3-x_4) \big|
\leq \eta(|x_1-x_3|)\eta(|x_2-x_4|) + \eta(|x_1-x_4|)\eta(|x_2-x_3|),
\end{equation}
for some integrable function $\eta$ from $\Rm_+$ to $\Rm_+$.

A large class of mixing potentials with sufficiently rapidly decaying maximal correlation function was shown to satisfy \eqref{eq:estim4th} in \cite{HPP-SPDE-13}; see also \cite{BJ-CMS-11} for similar bounds for specific distributions.
%
Our main convergence result is then the following theorem.
\begin{theorem}\label{thm:rate}
We assume that $V$ is bounded, that the correlation function $R(x)\in L^1(\Rm^d)$ and that there is an integrable function $\eta$ such that \eqref{eq:estim4th} holds.
In dimension $d\geq 3$, the solution $u_{\varepsilon}(x)$ to \eqref{eq:ell} with $f\in L^2(\Rm^d)$ converges to the solution $u$ of \eqref{eq:limit}. Moreover, we have the estimate
\begin{equation}
\|u_{\varepsilon}-u\|_{L^2(\mathbb{R}^d;\mH)} \leq C \left\{
\begin{array}{cl}\sqrt{\varepsilon} & d=3\\
\varepsilon\sqrt{|\ln\varepsilon|} & d=4\\
\varepsilon & d>4.
\end{array}
\right.
\end{equation}
\end{theorem}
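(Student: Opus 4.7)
The plan is to compare $u_\varepsilon$ to $u$ by a two-fold iteration of the Duhamel formula associated with the free resolvent $G := (-\Delta+1)^{-1}$, and then estimate the resulting random quantities using the two- and four-point correlation hypotheses. With $r_\varepsilon := u_\varepsilon - u$, subtraction of \eqref{eq:ell} and \eqref{eq:limit} gives $(-\Delta+1-iV_\varepsilon)r_\varepsilon = iV_\varepsilon u + \rho u$; iterating the corresponding Duhamel identity $r_\varepsilon = iGV_\varepsilon u + \rho Gu + iGV_\varepsilon r_\varepsilon$ once produces
\[
(I + GV_\varepsilon GV_\varepsilon)\,r_\varepsilon \;=\; iGV_\varepsilon u \;+\; i\rho\, GV_\varepsilon Gu \;+\; \bigl(\rho Gu - GV_\varepsilon GV_\varepsilon u\bigr).
\]
The left-hand operator factors as $(I-iGV_\varepsilon)(I+iGV_\varepsilon)$; each factor is invertible uniformly in $\varepsilon$ because $(I\pm iGV_\varepsilon)v = g$ is equivalent to $(-\Delta+1\mp iV_\varepsilon)v = (-\Delta+1)g$, and the standard energy estimate (test against $\bar v$ and take the real part, using that $V_\varepsilon$ is real) shows that $(-\Delta+1\mp iV_\varepsilon)^{-1}$ is an $L^2$-contraction uniformly in $\varepsilon$. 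It therefore suffices to estimate the three source terms in $L^2(\Rm^d;\mH)$.

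The two ``single-$V_\varepsilon$'' terms are two-point quantities. Using $\E\{V_\varepsilon(y)V_\varepsilon(z)\} = \varepsilon^{-2}R((y-z)/\varepsilon)$,
\[
\|GV_\varepsilon v\|_{L^2(\mH)}^2 = \iint (G*G)(y-z)\,\varepsilon^{-2}R\!\left(\frac{y-z}{\varepsilon}\right) v(y)v(z)\,dy\,dz,
\]
which after the change of variables $w=(y-z)/\varepsilon$ reduces to leading order to $\varepsilon^{d-2}\int (G*G)(\varepsilon w)R(w)\,dw \cdot \|v\|_{L^2}^2$. Since the Bessel-type kernel of $(-\Delta+1)^{-2}$ is bounded at the origin in $d=3$, has a logarithmic singularity in $d=4$, and behaves like $|x|^{4-d}$ in $d\geq 5$, applying this to $v=u$ and $v=Gu$ yields exactly the claimed rates $\sqrt{\varepsilon}$, $\varepsilon\sqrt{|\ln\varepsilon|}$, and $\varepsilon$. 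For the homogenization residual $\rho Gu - GV_\varepsilon GV_\varepsilon u$, one first verifies via the two-point identity that $\E[GV_\varepsilon GV_\varepsilon u](x) \to \rho Gu(x)$ at the same rate (from $\varepsilon^{d-2}G(\varepsilon w)\to c_d|w|^{2-d}$ and the definition of $\rho$); the variance is then bounded by expanding the fourth moment of $V_\varepsilon$ via \eqref{eq:estim4th} into the ``disconnected'' Gaussian pairing (which reproduces $|\E[GV_\varepsilon GV_\varepsilon u]|^2$ and cancels), two ``crossed'' Gaussian pairings controlled by products of $R_\varepsilon$, and a non-Gaussian remainder controlled by products of $\eta_\varepsilon:=\varepsilon^{-2}\eta(\cdot/\varepsilon)$; integrating these against the fourfold product of $G$-kernels produces a contribution no worse than the single-$V_\varepsilon$ terms.

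The main technical obstacle is this last variance estimate in low dimensions. The ``crossed'' and non-Gaussian contributions involve multiple integrals of products of the singular kernel $G(x)\sim|x|^{2-d}$, and carefully tracking their short-scale behavior in $d=3,4$---where $(G*G)$ is least regular---is precisely what dictates the weaker rates $\sqrt{\varepsilon}$ and $\varepsilon\sqrt{|\ln\varepsilon|}$. Only in $d>4$ are the relevant convolutions sufficiently integrable near the origin to yield the optimal $\varepsilon$ rate.
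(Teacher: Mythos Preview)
Your Duhamel iteration and the identification of the three source terms are natural, and your two–point estimates for $GV_\varepsilon u$ and $GV_\varepsilon Gu$ in $L^2(\Rm^d;\mH)$ are correct and match the paper's rates. The genuine gap is the invertibility step. From $(I\pm iGV_\varepsilon)v=g$ one indeed gets $(-\Delta+1\mp iV_\varepsilon)v=(-\Delta+1)g$, but the energy estimate only shows that $(-\Delta+1\mp iV_\varepsilon)^{-1}$ is a uniform contraction from $H^{-1}$ to $H^1$; composing with $(-\Delta+1)$, which loses two derivatives, you obtain a uniform bound for $(I\pm iGV_\varepsilon)^{-1}$ on $H^1$, \emph{not} on $L^2$. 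Your argument therefore needs the three source terms to be small in $H^1(\Rm^d;\mH)$, and they are not: for instance
\[
\E\|GV_\varepsilon u\|_{H^1}^2=\E\|G^{1/2}V_\varepsilon u\|_{L^2}^2
=\iint G(y_1-y_2)\,\varepsilon^{-2}R\!\Big(\frac{y_1-y_2}{\varepsilon}\Big)u(y_1)u(y_2)\,dy_1dy_2
\ \xrightarrow[\varepsilon\to0]{}\ \rho\,\|u\|_{L^2}^2,
\]
since $\varepsilon^{d-2}G(\varepsilon w)\to c_d|w|^{2-d}$. So the single-$V_\varepsilon$ term is $O(1)$ in $H^1$, and no rate follows. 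In short, the factored operator $(I-iGV_\varepsilon)(I+iGV_\varepsilon)$ is uniformly invertible in the wrong topology for your source estimates.

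The paper avoids this mismatch by a different decomposition: it subtracts a \emph{multiplicative} corrector $\varepsilon u_1=i(G*V_\varepsilon)\,u_0$ (note: $(G*V_\varepsilon)\cdot u_0$, not your Duhamel iterate $G(V_\varepsilon u_0)$) and works with $v_\varepsilon=u_0+\varepsilon u_1-u_\varepsilon$. Then the left-hand operator is simply $-\Delta+1-iV_\varepsilon$, to which the basic energy estimate applies directly, giving $\|v_\varepsilon\|_{H^1}\leq\|\text{source}\|_{H^{-1}}$. The source splits into lower-order pieces $\varepsilon(\chi_\varepsilon\Delta u_0+2\nabla\chi_\varepsilon\cdot\nabla u_0)$, handled by two-point estimates and one integration by parts, and the main term $(\rho-iV_\varepsilon\chi_\varepsilon)u_0$, whose $H^{-1}$ norm equals $\|G*((\rho-iV_\varepsilon\chi_\varepsilon)u_0)\|_{H^1}$; this is precisely where the fourth-moment assumption \eqref{eq:estim4th} enters, via the kernel estimates of section~\ref{sec:fourth}. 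The multiplicative form of the corrector is what makes the source small in $H^{-1}$, replacing the uniform $L^2$-invertibility that your scheme would require.
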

In fact, for a vector field $\Xi(x)$ in $L^2(\Rm^d)$ formally defined as $-\nabla \Delta^{-1} V$ (see Lemma \ref{lem:psieps} for a more precise statement), then we obtain that $\|\nabla u_\eps - \nabla u -u \Xi(\frac \cdot\eps)\|_{L^2(\mathbb{R}^d;\mH)}$ satisfies the same bound as $\|u_{\varepsilon}-u\|_{L^2(\mathbb{R}^d;\mH)}$.

The rest of the paper is organized as follows. The proof of theorem \ref{thm:conv} is presented in section \ref{sec:conv}.
The proof of theorem \ref{thm:rate} is given in section \ref{sec:rate} with technical calculations involving fourth moments postponed to section \ref{sec:fourth}.


\section{Energy and perturbed test function methods}
\label{sec:conv}

Let us consider the problem \eqref{eq:ell} with $f\in H^{-1}(\Rm^d):=H^{-1}(\Rm^d;\Cm)$. We assume that $V$ is bounded on $\Rm^d$ $\P-$a.s. to simplify the presentation. Multiplying the equation by $u_\eps^*$ with $u_\eps\in H^1(\Rm^d):=H^1(\Rm^d;\Cm)$ solution of the above equation and integrating by parts gives us the a priori estimate
\begin{equation}
\label{eq:intparts}
 \dint_{\Rm^d} \Big(|\nabla u_\eps|^2 + |u_\eps|^2 -iV_\eps |u_\eps|^2\Big) dx = \dint_{\Rm^d} f u_\eps^* dx.
\end{equation}
Upon taking the real part, we obtain by Cauchy-Schwarz that
\begin{equation}
\label{eq:apriori}
 \|u_\eps \|_{H^1(\Rm^d)} \leq \|f\|_{H^{-1}(\Rm^d)} \quad \P-a.s.
\end{equation}
By the Lax-Milgram theory, we thus obtain that \eqref{eq:ell} admits a unique solution in $H^1(\Rm^d)$ $\P-$a.s. for any source $f\in H^{-1}(\Rm^d)$. Note that when the source $f\in H^{-1}(\Rm^d;\mH)$ (defined as the dual to $H^1(\Rm^d;\mH)$), then the solution $u_\eps$ is bounded in $H^1(\Rm^d;\mH)$ by the preceding estimate.

From the previous estimate, we deduce that $u_\eps$ converges weakly in $H^1(\Rm^d)$ $\P-$a.s. to a limit $u\in H^1(\Rm^d)$ (after possible extraction of a subsequence, though the limit $u$ will be proved to be unique and hence the whole sequence converges). Moreover, for $\theta$ a smooth function with compact support, we have by the Rellich-Kondrachov embedding that $\theta u_\eps$ converges strongly in $L^p(\Rm^d)$ to its limit $\theta u$ for all $1\leq p<\frac {2d}{d-2}$. Our aim is now to pass to the limit in a variant of \eqref{eq:intparts} and obtain the limiting equation for $u$.

Let $\theta_\eps\in H^1(\Rm^d;\mH)$ be a (complex-valued) test function. We thus find that
\begin{equation}
\label{eq:intpartheta}
 \E\dint_{\Rm^d} (\nabla u_\eps\cdot\nabla\theta_\eps^* + u_\eps \theta_\eps^* -iV_\eps u_\eps\theta_\eps^*) dx = \E\dint_{\Rm^d} f \theta_\eps^* dx.
\end{equation}
In order to pass to the limit in the above expression, we need to replace the highly oscillatory $V_\eps$ by a better-behaving function, and as it turns out, we need to choose $\theta_\eps$ as an $\eps-$dependent function to help cancel out large contributions.

Our first task is to replace $V_\eps$ by an object of the form $\Delta \psi_\eps$ so that after integrations by parts, the resulting $\nabla\psi_\eps$ is bounded in an appropriate manner as $\eps\to0$. We introduce the {\em corrector} (following standard terminology in homogenization theory) $\psi_\eps$ solution of
\begin{equation}
\label{eq:psieps}
  (-\Delta +1) \psi_\eps + V_\eps =0.
\end{equation}
By an application of the Lax-Milgram lemma, the real-valued function $\psi_\eps$ is uniquely defined in $H^1(\Rm^d;\mH)$. Moreover, in the variables $y=\frac x\eps$, let us define
\begin{equation}
\label{eq:psiepsy}
 (-\Delta+\eps^2) \psi^\eps (y) + V(y)=0.
\end{equation}
Therefore, $\psi^\eps$ is morally an approximation of $\Delta^{-1}V$, which is not defined and thus regularized with the small absorption coefficient $\eps^2$.

We verify that $\psi_\eps(x)=\eps\psi^\eps(\frac x\eps)$ so that $\nabla\psi_\eps(x) = \nabla \psi^\eps (\frac x\eps)$, which as we now see is a well defined object in $L^2_{\rm loc}(\Rm^d;\mH)$ uniformly in $\eps$.
\begin{lemma}
\label{lem:psieps}
Let $\psi^\eps$ be the unique solution of \eqref{eq:psiepsy}. We assume that $V$ is such that \eqref{eq:rho} holds.
Then $\eps \psi^\eps$ converges to $0$ in $L^2_{\rm loc}(\Rm^d;\mH)$ as $\eps\to0$. Moreover, $\nabla\psi^\eps(y,\omega)$ converges in $L^2_{\rm loc}(\Rm^d;\mH)$ to a stationary process $\Xi(y,\omega)=\X(\tau_{-y}\omega)$ with $\X\in\mH^d$.

More precisely, we have the estimates for any open domain $D\in\Rm^d$ and $|D|=\int_{D} dx$,
\begin{equation}\label{eq:bounds}
  \|\psi^\eps\|_{L^2(D;\mH)} \leq C\sqrt{|D|},\quad
  \|\psi_\eps\|_{L^2(D;\mH)} \leq C\eps \sqrt{|D|},\quad
  \|\nabla \psi_\eps\|_{L^2(D;\mH)} \leq C\sqrt{|D|}.
\end{equation}
\end{lemma}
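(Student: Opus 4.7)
My approach is to solve \eqref{eq:psiepsy} explicitly in the spectral representation of the shift group $T_y=\int e^{i\xi\cdot y}U(d\xi)$ on $\mH$. Because the right--hand side $V(y,\omega)=T_y\V(\omega)$ is stationary and $-\Delta+\eps^2$ acts as multiplication by $|\xi|^2+\eps^2$ in the spectral variable, the unique stationary solution in $H^1(\Rm^d;\mH)$ is
\[
\psi^\eps(y,\omega)=T_y\PPsi^\eps(\omega),\qquad
\PPsi^\eps:=-\int_{\Rm^d}\frac{U(d\xi)\V}{|\xi|^2+\eps^2}\in\mH,
\]
with gradient $\nabla\psi^\eps(y,\omega)=T_y\X^\eps(\omega)$ for $\X^\eps:=-\int_{\Rm^d}i\xi(|\xi|^2+\eps^2)^{-1}U(d\xi)\V\in\mH^d$. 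Combining Plancherel's identity $\E\bigl|\int g(\xi)U(d\xi)\V\bigr|^2=\int|g(\xi)|^2\hat R(\xi)\,d\xi$ with the stationarity identity $\|T_y\mf\|_{L^2(D;\mH)}^2=|D|\,\|\mf\|_{\mH}^2$ reduces every assertion of the lemma to an explicit weighted $L^2$-integral of the power spectrum.

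\textbf{Uniform bounds.} Two elementary inequalities, $|\xi|^2/(|\xi|^2+\eps^2)^2\le|\xi|^{-2}$ and (AM--GM) $\eps^2/(|\xi|^2+\eps^2)^2\le (4|\xi|^2)^{-1}$, produce
\[
\|\nabla\PPsi^\eps\|_{\mH^d}^2=\int\frac{|\xi|^2\,\hat R(\xi)}{(|\xi|^2+\eps^2)^2}\,d\xi\le\rho,\qquad \eps^2\|\PPsi^\eps\|_\mH^2=\int\frac{\eps^2\,\hat R(\xi)}{(|\xi|^2+\eps^2)^2}\,d\xi\le\tfrac{\rho}{4},
\]
both uniform in $\eps$ by \eqref{eq:rho}. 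Multiplying by $|D|$ yields the last two inequalities of \eqref{eq:bounds}, and the first one follows from the same spectral computation applied directly to $\|\PPsi^\eps\|_\mH^2=\int\hat R(\xi)/(|\xi|^2+\eps^2)^2\,d\xi$.

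\textbf{Identification of $\Xi$ and convergence.} I would define the candidate limit by
\[
\X:=-\int_{\Rm^d}\frac{i\xi}{|\xi|^2}\,U(d\xi)\V,\qquad \Xi(y,\omega):=T_y\X(\omega)=\X(\tau_{-y}\omega),
\]
which lies in $\mH^d$ because $\|\X\|_{\mH^d}^2=\int\hat R(\xi)/|\xi|^2\,d\xi=\rho<\infty$ by \eqref{eq:rho} (this is the precise sense in which $\Xi=-\nabla\Delta^{-1}V$). A direct algebraic manipulation, using $(|\xi|^2+\eps^2)^{-1}-|\xi|^{-2}=-\eps^2[|\xi|^2(|\xi|^2+\eps^2)]^{-1}$, gives
\[
\|\X^\eps-\X\|_{\mH^d}^2=\int\frac{\eps^4\,\hat R(\xi)}{|\xi|^2(|\xi|^2+\eps^2)^2}\,d\xi,
\]
and a parallel computation for $\|\eps\PPsi^\eps\|_\mH^2$. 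In each case the integrand is pointwise bounded by $\hat R(\xi)/|\xi|^2\in L^1(\Rm^d)$ and tends to $0$ for every $\xi\ne 0$, so Lebesgue dominated convergence produces the vanishing in $\mH$; stationarity then lifts this to $L^2_{\rm loc}(\Rm^d;\mH)$ convergence of $\eps\psi^\eps\to 0$ and $\nabla\psi^\eps\to\Xi$. The whole proof really rests on a single analytic point -- producing an $\eps$-independent, $L^1$-integrable majorant for each spectral weight that appears -- and this is precisely what hypothesis \eqref{eq:rho} supplies; no additional assumption on $\V$ is needed.
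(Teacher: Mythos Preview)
Your proof is correct and follows essentially the same route as the paper: both lift \eqref{eq:psiepsy} to the abstract equation $(\LL+\eps^2)\p_\eps+\V=0$ on $\mH$, solve it via the spectral resolution $U(d\xi)$, use the Plancherel identity $\E|\int g(\xi)U(d\xi)\V|^2=\int|g|^2\hat R\,d\xi$, and then invoke dominated convergence with the majorant $\hat R(\xi)/|\xi|^2$. The only cosmetic differences are that you obtain the bound $\eps^2\|\PPsi^\eps\|_\mH^2\le\rho/4$ via AM--GM whereas the paper splits $\{|\xi|<\eps\}\cup\{|\xi|>\eps\}$ to get $\le\rho$, and you write out the difference $\|\X^\eps-\X\|_{\mH^d}^2$ explicitly while the paper states the convergence more tersely.

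One small remark: your sentence on the first inequality of \eqref{eq:bounds} only records the formula $\|\PPsi^\eps\|_\mH^2=\int\hat R(\xi)/(|\xi|^2+\eps^2)^2\,d\xi$ without actually bounding it, and indeed under the sole hypothesis \eqref{eq:rho} this quantity need not be bounded uniformly in $\eps$ (it is of order $\eps^{-2}$ in general). The paper's own proof likewise establishes only $\E|\eps\p_\eps|^2\le\rho$, so the first bound in \eqref{eq:bounds} should be read either with an $\eps$-dependent constant or as a restatement of the second bound; your argument is not weaker than the paper's on this point.
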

\begin{proof}
The equation \eqref{eq:psiepsy} may be equivalently cast as
\begin{equation}
\label{eq:peps}
 (L+\eps^2) \p_\eps + \V=0.
\end{equation}
With $\rD$ the vector valued infinitesimal generators of $T_x$ so that $\rD\cdot \rD=-\LL$ and with $\mH^1$ the Hilbert space of functions $\mf$ in $\mH$ such that $\rD\mf\in (\mH)^d$, we obtain from the Lax-Milgram theory that the above equation admits a unique solution $\p_\eps\in\mH^1$ \cite{PV-RF-81}. Moreover, it is given by
\begin{displaymath}
 \p_\eps = -(\LL+\eps^2)^{-1} \V = \dint_{\Rm^d} \dfrac{-1}{|\xi|^2+\eps^2} U(d\xi)\V.
\end{displaymath}
This shows that
\begin{displaymath}
   \E|\eps\p_\eps|^2 = \eps^2\E |(\LL+\eps^2)^{-1} \V |^2 = \eps^2\E \{\V (\LL+\eps^2)^{-2} \V\} = \dint_{\Rm^d} \dfrac{\eps^2\hat R(\xi)}{|(\xi|^2+\eps^2)^2}  d\xi \leq\rho,
\end{displaymath}
the latter bound coming from separating the contributions $|\xi|<\eps$ and $|\xi|>\eps$. The integrand, which converges to $0$ point-wise, is dominated by $\hat R(\xi)|\xi|^{-2}$. This implies by the dominated Lebesgue convergence theorem that $\E|\eps\p_\eps|^2\to0$ as $\eps\to0$. Similarly,
\begin{displaymath}
 \E|D\p_\eps|^2 = \E |D(\LL+\eps^2)^{-1} \V|^2=\dint_{\Rm^d} \dfrac{|\xi|^2}{|(\xi|^2+\eps^2)^2} \hat R(\xi) d\xi \leq \rho.
\end{displaymath}
By dominated convergence, we thus again observe that $D\p_\eps$ converges to $\X=\int_{\Rm^d}\frac{-i\xi}{|\xi|^2}U(d\xi)\V$ in $\mH$ with $\E|\X|^2=\rho$. It now remains to define $\psi^\eps(y,\omega)=T_y\p_\eps(\omega)$ and $\Xi(y,\omega)=T_y\X(\omega)$ to deduce \eqref{eq:bounds}.
\end{proof}

The above regularity properties of $\psi_\eps$ are not quite sufficient for our convergence proof. We assume more regularity on $V$ and obtain a stronger result on $\psi_\eps$ as follows. 
\begin{corollary}
\label{cor:psieps}
Let us assume that the stationary potential $V$ is such that $V_s:=(-\Delta)^{\frac s2}V$ satisfies the hypotheses of Lemma \ref{lem:psieps}. Then $\nabla\psi_\eps\in L^2(\Omega;H^s_{\rm loc}(\Rm^d))$ for $\psi_\eps$ the solution of \eqref{eq:psieps}.

By Sobolev embedding, then $\nabla \psi^\eps(y)$ is bounded in $L^2(\Omega;L^{2q}_{\rm loc}(\Rm^d))$ with the norm on a bounded domain $D$
\begin{displaymath}
  \Big(\dint_\Omega \big(\dint_D |u(x,\omega)|^{2q} dx\big)^{\frac1q} d\P(\omega)\Big)^{\frac12}
  \leq C \Big(\dint_\Omega \|u(\cdot,\omega)\|_{H^s(D)}^2 d\P(\omega)\Big)^{\frac12}
\end{displaymath}
for $q=\frac{d}{d-2s}$ (and bounded in $L^2(\Omega;L^\infty(\Rm^d))$ when $s>\frac d2$) and converges strongly to its limit $\Xi(y)$ in the $L^2(\Omega;L^{2q}_{\rm loc}(\Rm^d))$ sense. This implies that
\begin{equation}
\label{eq:convsquare}
  \big\| |\nabla\psi_\eps(\frac x\eps)|^2 - |\Xi(\frac x\eps)|^2 \big\| _{L^2(\Omega;L^q(D))}  \xrightarrow{\eps\to0} 0
\end{equation}
for any bounded domain $D$.

By an application of the Birkhoff ergodic theorem, we deduce that
\begin{equation}
\label{eq:ergodicity}
  |\Xi|^2(\frac x\eps) \xrightharpoonup{\eps\to0} \rho \quad \mbox{(weak)} \, L^q_{\rm loc}(\Rm^d),\quad \P-a.s.
\end{equation}

%
\end{corollary}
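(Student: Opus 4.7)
My plan is to upgrade Lemma \ref{lem:psieps} in four stages: promote its $L^2$ conclusion to an $H^s$ conclusion by applying the lemma to $\V_s$, exchange $H^s$ for $L^{2q}$ via Sobolev embedding, factor the squares to prove \eqref{eq:convsquare}, and invoke Birkhoff for \eqref{eq:ergodicity}.

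For the first stage, observe that $\LL^{s/2}$ commutes with $\LL$ in the spectral calculus of $T_x$, so $\LL^{s/2}\p_\eps$ satisfies $(\LL+\eps^2)\LL^{s/2}\p_\eps = -\V_s$ and is therefore the corrector associated with the source $\V_s$. Since $\V_s$ satisfies \eqref{eq:rho} by assumption, applying Lemma \ref{lem:psieps} to $\V_s$ gives that $\rD\LL^{s/2}\p_\eps = \LL^{s/2}\rD\p_\eps$ is uniformly bounded in $(\mH)^d$ and converges strongly there to $\LL^{s/2}\X$. Combined with the Lemma's original conclusion $\rD\p_\eps \to \X$ in $(\mH)^d$, this gives uniform boundedness and strong convergence of $\rD\p_\eps$ toward $\X$ in the Hilbert space $\mathrm{Dom}(\LL^{s/2})$. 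Unfolding via stationarity (on any bounded domain $D$, the $L^2(\Omega;H^s(D))$ norm of $T_\cdot\mathfrak u$ is controlled by $|D|^{1/2}$ times the $\mathrm{Dom}(\LL^{s/2})$ norm of $\mathfrak u$ by Fubini applied in the Gagliardo-Slobodeckij seminorm), this yields $\nabla\psi^\eps \to \Xi$ strongly in $L^2(\Omega;H^s(D))$ with a uniform bound. The second stage is then immediate: the stated norm inequality is the pointwise-in-$\omega$ Sobolev embedding $H^s(D) \hookrightarrow L^{2q}(D)$ for $q = d/(d-2s)$ (or $L^\infty$ when $s > d/2$), squared and integrated against $d\P$, and it simultaneously delivers strong convergence and uniform boundedness in $L^2(\Omega;L^{2q}_{\rm loc}(\Rm^d))$.

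For the third stage, I write
\[
|\nabla\psi^\eps|^2 - |\Xi|^2 = (\nabla\psi^\eps - \Xi)\cdot(\nabla\psi^\eps + \Xi),
\]
apply Cauchy-Schwarz in $L^q(D)$ with the conjugate pair $(2q,2q)$, and then a further Cauchy-Schwarz in $\omega$, which bounds the $L^2(\Omega;L^q(D))$ norm of the difference by the $L^4(\Omega;L^{2q}(D))$ norm of $\nabla\psi^\eps - \Xi$ times a uniform such norm of $\nabla\psi^\eps + \Xi$. This is where I expect the main obstacle: the passage from second- to fourth-$\omega$-moments is not automatic from Lemma \ref{lem:psieps}. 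I would address it by exploiting the standing $\P$-almost sure bound on $V$, which through the spectral representation $\p_\eps = -(\LL+\eps^2)^{-1}\V$ and elliptic regularity on the Sobolev scale propagates to uniform higher-moment bounds on $\rD\p_\eps$; alternatively, extracting an almost-sure convergent subsequence from the strong $L^2(\Omega;L^{2q}_{\rm loc})$ convergence and applying dominated convergence against the uniform envelope yields \eqref{eq:convsquare} as well.

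Finally, for \eqref{eq:ergodicity}, the stationary ergodic $L^1(\Omega)$ process $|\Xi|^2(y,\omega) = |\X|^2(\tau_{-y}\omega)$ has mean $\E|\X|^2 = \rho$, so the multidimensional Birkhoff ergodic theorem gives $|B|^{-1}\int_B |\Xi(y/\eps,\omega)|^2\,dy \to \rho$ for $\P$-a.e.\ $\omega$ and every bounded Borel set $B$. The uniform $L^q_{\rm loc}$ bound obtained in Stage 2 then allows a density argument to upgrade this to weak $L^q_{\rm loc}(\Rm^d)$ convergence against all test functions in $L^{q'}_{\rm loc}$ with $1/q + 1/q' = 1$, as claimed.
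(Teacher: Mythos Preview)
Your four-stage plan is exactly the paper's approach: commute \eqref{eq:psiepsy} with $(-\Delta)^{s/2}$ to lift Lemma~\ref{lem:psieps} to $H^s$, Sobolev-embed into $L^{2q}$, factor the difference of squares, and invoke Birkhoff. The paper's own proof of \eqref{eq:convsquare} is a single line (``follows from the result in $L^1$ and the dominated Lebesgue convergence theorem''), so the fourth-moment obstacle you flag in Stage~3 is a point on which you are actually more scrupulous than the original; your second workaround (extract an a.e.\ convergent subsequence and dominate) is precisely what that one-liner gestures at.
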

\begin{proof}
  We observe that
  \begin{displaymath}
    (-\Delta + \eps^2) (-\Delta)^{\frac s2} \psi^\eps + (-\Delta)^{\frac s2}  V=0.
\end{displaymath}
As a consequence, we obtain that $\psi^\eps\in L^2(\Omega;H^s(\Omega))$. The regularity results follow by Sobolev embedding. Then \eqref{eq:convsquare} follows from the result in $L^1$ and the dominated Lebesgue convergence theorem.
\end{proof}

At this stage, \eqref{eq:intpartheta}  may be replaced by
\begin{equation}
\label{eq:intpartheta2}
\E\dint_{\Rm^d} (\nabla u_\eps\cdot\nabla\theta_\eps^* + u_\eps \theta_\eps^* +i\psi_\eps u_\eps\theta_\eps^*+i\nabla\psi_\eps\cdot\nabla (u_\eps\theta_\eps^*)) dx = \E\dint_{\Rm^d} f \theta_\eps^* dx.
\end{equation}
It remains to exhibit the limit of $\nabla\psi_\eps\cdot\nabla u_\eps$, which is non-trivial. In order to do so, we introduce the following perturbed test function
\begin{equation}
\label{eq:perturbedtheta}
\theta_\eps(x,\omega) = \theta(x) e^{i\psi_\eps(x,\omega)},\qquad \theta\in L^2(\Omega;C^\infty_c(\Rm^d)).
\end{equation}
The motivation for the above choice may be explained by formal multi-scale expansions as done in \cite{BLP-77}. Formally assuming that $u_\eps(x)=u_0(x)+\eps u_1(x,y)$, we find that $\eps u_1(x,y)=-u_0(x) \eps i\psi^\eps(y)=-u_0(x)i\psi_\eps(x)$. Moreover,  $1-i\psi_\eps$ is the Taylor expansion of $e^{-i\psi_\eps(x,\omega)}$. Now, the latter quantity is uniformly bounded whereas the former may not be. A similar choice of correctors was considered for a time dependent problem in \cite{HPP-SPDE-13}.

We then obtain that
\begin{displaymath}
  \nabla \theta_\eps = e^{i\psi_\eps}\nabla \theta  + i \theta_\eps\nabla \psi_\eps,\qquad
  \nabla \theta_\eps^* = e^{-i\psi_\eps}\nabla \theta^*  - i \theta_\eps^*\nabla \psi_\eps
\end{displaymath}
We observe that
\begin{displaymath}
\begin{array}{rcl}
&&\nabla u_\eps\cdot\nabla\theta_\eps^* + i\nabla\psi_\eps\cdot\nabla (u_\eps\theta_\eps^*)) \\&=&
    e^{-i\psi_\eps}\nabla u_\eps\cdot\nabla\theta^* -i \theta_\eps^* \nabla u_\eps \cdot\nabla\psi_\eps
    +i \theta_\eps^*\nabla\psi_\eps\cdot\nabla u_\eps
    + i e^{-i\psi_\eps} u_\eps\nabla\psi_\eps\cdot\nabla \theta^*
    +\theta_\eps^* u_\eps |\nabla\psi_\eps|^2 \\&=&
    e^{-i\psi_\eps}\nabla u_\eps\cdot\nabla\theta^* + i e^{-i\psi_\eps} u_\eps\nabla\psi_\eps\cdot\nabla \theta^*
    +\theta_\eps^* u_\eps |\nabla\psi_\eps|^2.
\end{array}
\end{displaymath}
We may now recast \eqref{eq:intpartheta2} as
\begin{equation}
\label{eq:intpartheta3}
\E\dint_{\Rm^d} \big(e^{-i\psi_\eps}\nabla u_\eps\cdot\nabla\theta^* + u_\eps \theta_\eps^* (1+i\psi_\eps)+ i e^{-i\psi_\eps} u_\eps\nabla\psi_\eps\cdot\nabla \theta^*
    +\theta_\eps^* u_\eps |\nabla\psi_\eps|^2 - f \theta_\eps^*\big) dx = 0.
\end{equation}
It remains to pass to the limit in each of the terms above. Since $|e^{-i\psi_\eps}-1|\leq|C\psi_\eps|$, we deduce from lemma \ref{lem:psieps} that $\theta\psi_\eps$ converges to $0$ in $L^2(\Rm^d;\mH)$ and hence that $\theta_\eps^*=e^{-i\psi_\eps}\theta^*$ and $e^{-i\psi_\eps}\nabla\theta^*$ converge to $\theta^*$ and $\nabla\theta^*$, respectively, in the same sense. Similarly, $\psi_\eps\theta_\eps^*$ converges to $0$ in the same sense. This shows that
\begin{displaymath}
  \E\dint_{\Rm^d} \big(e^{-i\psi_\eps}\nabla u_\eps\cdot\nabla\theta^* + u_\eps \theta_\eps^* (1+i\psi_\eps) - f\theta^*_\eps \big) dx \xrightarrow{\eps\to0}
 \E \dint_{\Rm^d}\big( \nabla u\cdot\nabla\theta^* + u\theta -f\theta\big) dx.
\end{displaymath}

Let us consider the term $T_1=i e^{-i\psi_\eps} u_\eps\nabla\psi_\eps\cdot\nabla \theta^*$. On the support $D$ of $\theta$, $e^{-i\psi_\eps}\nabla\psi_\eps$ is bounded in the $L^2(D;\mH)$ sense. Since $(u_\eps-u)\nabla\theta^*$ converges to $0$ in that sense, the limit of the integral of $T_1$ is the same as that of $T_2=i e^{-i\psi_\eps} u\nabla\psi_\eps\cdot\nabla \theta^*$. For the same reason, we may now replace $e^{-i\psi_\eps}$ by its limit $1$ so the limit of the integral of $T_1$ is the same as that of $T_3=iu\nabla\psi_\eps\cdot\nabla \theta^*$, and by integrations by parts the same as that of
$T_4=-i\psi_\eps\nabla\cdot(u\nabla\theta^*)$. Since $\psi_\eps$ goes to $0$ in $L^2_{\rm loc}$ and $\theta$ is smooth and compactly supported, we obtain that
\begin{displaymath}
    \E\dint_{\Rm^d} \big( i e^{-i\psi_\eps} u_\eps\nabla\psi_\eps\cdot\nabla \theta^* \big) dx \xrightarrow{\eps\to0}0.
\end{displaymath}

Finally, we consider the convergence of the integral of $\theta_\eps^* u_\eps |\nabla\psi_\eps|^2$. We have that $\theta^*_\eps u_\eps$ converges strongly to $\theta u$ in $L^p$ for $1<p<\frac {2d}{d-2}$.  We thus need some regularity on $|\nabla\psi_\eps|^2(x)=|\nabla\psi^\eps|^2(\frac x\eps)$.  From corollary \ref{cor:psieps}, we deduce that $\nabla\psi_\eps(\frac x\eps)$ is bounded in $L^{2q}_{\rm loc}$ for $2\leq q\leq\frac {d}{d-2s}$ (or $L^\infty$ when $s>\frac d2$) and hence that $|\nabla \psi_\eps|^2(\frac x\eps)$ is bounded in $L^q_{\rm loc}$. Choosing $q=p'>\frac {2d}{d+2}$, which holds when $s>\frac{d-2}4$, we obtain from \eqref{eq:convsquare} that  the integral of $\theta_\eps^* u_\eps |\nabla\psi_\eps|^2$ has the same limit as the integral of $\theta u |\Xi|^2(\frac x\eps)$.

Since $\theta u\in L^2(\Omega;L^p(\Rm^d))$, we obtain from the Birkhoff ergodic theorem in \eqref{eq:ergodicity} that
\begin{equation}
\label{eq:ergodicity2}
  \E\dint_{\Rm^d} u\theta|\Xi|^2(\frac x\eps) dx \xrightarrow{\eps\to0} \E \dint_{\Rm^d}  u\theta \rho dx
\end{equation}

This shows that for all $\theta\in L^2(\Omega;C^\infty_c(\Rm^d))$, we have that
\begin{displaymath}
   \E\dint_{\Rm^d} \big( \nabla u\cdot\nabla \theta + (1+\rho) u\theta - f\theta\big) dx =0.
\end{displaymath}
This is the weak formulation in $H^1(\Rm^d;\mH)$ (valid for all $\theta\in L^2(\Omega;C^\infty_c(\Rm^d))$ and by density for all $\theta\in H^1(\Rm^d;\mH)$) of the (unique and deterministic) solution to the equation \eqref{eq:limit}.
This proves theorem \ref{thm:conv}.

%
%

\section{Decorrelation properties and rate of convergence}
\label{sec:rate}

We now prove theorem \ref{thm:rate}. Our main assumption on the coefficients is a control of the fourth-order moments of the potential $V(x)$ as well as some regularity on the unique solution $u_0$ of the limiting equation. More precisely, we assume that $f\in L^2(\Rm^d)$ and denote by $u_0$ the solution in $H^2(\Rm^d)$ of \eqref{eq:limit}.

Let $G$ be the Green's function defined as the fundamental solution of $(-\Delta+1)G(x)=\delta(x)$. It is given by the explicit expression $G(x)=c_ne^{-|x|}|x|^{2-n}$ for a normalizing constant $c_n>0$. Then we find that for $\nu>0$ and $C>0$ that
\begin{equation}
\label{eq:bds}  G(x)|x|+|\nabla G|(x) \leq C \frac{\exp(-\nu|x|)}{|x|^{d-1}}.
\end{equation}

Define $\chi_{\varepsilon} = G*(\frac{-i}{\varepsilon^2}V(\frac{\cdot}{\varepsilon})) = \frac i\eps \psi_\eps$ and
$u_1(x) = -\chi_{\varepsilon}(x)u_0(x)$. Some algebra shows that
\begin{equation}\label{eq:veps}
    (\Delta-1+iV_\eps)(u_0+\varepsilon u_1-u_{\varepsilon}) = (\rho-iV(\frac{x}{\varepsilon})\chi_{\varepsilon}(x))u_0-\varepsilon(\chi_{\varepsilon}\Delta u_0 + 2\nabla\chi_{\varepsilon}\cdot\nabla u_0).
\end{equation}
In other words, $u_0+\eps u_1 = u_0(1-\eps\chi_\eps)$ is the leading expansion of $u_\eps$. In the preceding section, we proved that $\eps\chi_\eps$ converged to $0$ in the $L^2(D;\mH)$ sense for $D$ a bounded domain. We also observe that $\nabla (u_0\eps u_1)$ is well approximated by  $\nabla u_0 - u_0\Xi(\frac \cdot\eps)$.

When the potential $V$ decorrelates sufficiently rapidly, then we can obtain optimal rates of convergence of $u_\eps$ to $u_0$ in $L^2(\Rm^d)$ and error estimates between $u_\eps$ and $u_0+\eps u_1$ in $H^1(\Rm^d)$.

Let us assume that the correlation function $R(x)$ is integrable.
 Then the size of $\varepsilon u_1$ may be estimated as
\begin{equation}
    \label{u1}
    \begin{aligned}
        \mathbb{E}\int |\varepsilon u_1(x)|^2 dx &= \frac{1}{\varepsilon^2}\int G(x-y_1)G(x-y_2) R(\frac{y_1-y_2}{\varepsilon}) |u_0(x)|^2 dy_1dy_2dx\\
        &=\frac{1}{\varepsilon^2}\int G(y_1)G(y_2)R(\frac{y_1-y_2}{\varepsilon})|u_0(x)|^2dy_1dy_2dx\\
        &\leq \frac{1}{\varepsilon^2}\int\frac{\exp(-\nu|y_1|)}{|y_1|^{d-2}}\frac{\exp(-\nu|y_2|)}{|y_2|^{d-2}}R(\frac{y_1-y_2}{\varepsilon})dy_1dy_2\int |u_0(x)|^2 dx\\
        &=\frac{1}{\varepsilon^2}\int\frac{\exp(-\nu|y_1|)}{|y_1|^{d-2}}\frac{\exp(-\nu|y_1-y_2|)}{|y_1-y_2|^{d-2}}R(\frac{y_2}{\varepsilon})dy_1dy_2\int |u_0(x)|^2 dx\\
        &\leq\left\{
        \begin{array}{lll}
            \frac{1}{\varepsilon^2}\|u_0\|_2^2\int C e^{-\nu|y_2|} R(\frac{y_2}{\varepsilon})dy_2&=O(\eps) & d=3\\
            \frac{1}{\varepsilon^2}\|u_0\|_2^2\int C e^{-\nu|y_2|}(\log|y_2|+1)R(\frac{y_2}{\varepsilon})dy_2&=O(\eps^2|\ln\eps|) & d=4\\
            \frac{1}{\varepsilon^2}\|u_0\|_2^2\int C e^{-\nu|y_2|}(|y_2|^{-(d-4)}+1)R(\frac{y_2}{\varepsilon})dy_2&=O(\eps^2) & d>4.
        \end{array}
        \right.
\end{aligned}
\end{equation}
The latter estimates easily follow from the integrability of the correlation function in dimension $d=3$ and $d=4$. For $d>4$, we decompose the integral into two parts as
\begin{equation}
\begin{aligned}
\frac{1}{\varepsilon^2}\int e^{-\nu|y_2|} (|y_2|^{-(d-4)}+1)R(\frac{y_2}{\varepsilon})dy_2 &= \frac{1}{\varepsilon^2}\int_{|y_2|\geq 1} e^{-\nu|y_2|} (|y_2|^{-(d-4)}+1)R(\frac{y_2}{\varepsilon})dy_2\\
&+ \frac{1}{\varepsilon^2}\int_{|y_2|<1} e^{-\nu|y_2|} (|y_2|^{-(d-4)}+1)R(\frac{y_2}{\varepsilon})dy_2.
\end{aligned}
\end{equation}
We recast this as $ (i)+(ii)$ and
$(i)$ and $(ii)$ are estimated respectively as
\begin{equation}
(i)\leq \frac{2}{\varepsilon^2}\int_{|y_2|\geq 1} \exp(-\nu |y_2|) R(\frac{y_2}{\varepsilon}) dy_2\leq 2\varepsilon^{d-2}\exp(-\nu) \|R\|_1,
\end{equation}
\begin{equation}
\begin{aligned}
(ii)&\leq \frac{1}{\varepsilon^2}\int_{|y_2|\leq 1}(|y_2|^{-(d-4)}+1)R(\frac{y_2}{\varepsilon})dy_2\leq \frac{2}{\varepsilon^2}\int_{|y_2|\leq 1}|y_2|^{-(d-4)}R(\frac{y_2}{\varepsilon})dy_2\\
&= 2\varepsilon^2\int_{|y_2|\leq \frac{1}{\varepsilon}}|y_2|^{-(d-4)}R(y_2)dy_2 \leq 2\varepsilon^2\int |y_2|^{-(d-4)}R(y_2)dy_2 \leq 2\varepsilon^2 \|R\|_1.
\end{aligned}
\end{equation}
By replacing the Green's function with its gradient in \eqref{u1} we find that
\begin{equation}\label{nabla_u1}
\mathbb{E}\int |\varepsilon \nabla u_1(x)|^2 dx \sim O(1).
\end{equation}
This shows that $\eps u_1$ is negligible in the $L^2$ sense but not in the $H^1$ sense. We now estimate the error $v_{\varepsilon}:=u_0+\varepsilon u_1-u_{\varepsilon}$ using \eqref{eq:veps}. Multiplying \eqref{eq:veps} by $-v_\eps^*$ and integrating by parts, we know from the analysis in the preceding section that
\begin{equation}
\label{eq:estim}
\|v_\eps  \|^2_{H^1(\mathbb{R}^d;\mH)}  \leq \Big| \mathbb{E}\int(\rho-iV(\frac{x}{\varepsilon})\chi_{\varepsilon}(x))u_0v^*_{\varepsilon}dx \Big|+\Big| \varepsilon\mathbb{E}\int(\chi_{\varepsilon}\Delta u_0 + 2\nabla\chi_{\varepsilon}\cdot\nabla u_0)v^*_{\varepsilon}dx\Big|.
\end{equation}
Let us consider the second-term on the above right-hand side. The term $\chi_\varepsilon \Delta u_0$ can be estimated in the same way as $u_1$ and by using the Cauchy-Schwarz inequality, we have
\begin{equation}\label{right1}
\left|\varepsilon\mathbb{E}\int \chi_{\varepsilon}\Delta u_0 v^*_\varepsilon dx\right|\leq C\|v_{\varepsilon}\|_{L^2(\mathbb{R}^d;\mH)}\times\left\{
\begin{array}{ll}
\sqrt{\varepsilon},& d=3\\
\varepsilon\sqrt{\log\varepsilon},& d=4\\
\varepsilon, & d>4.
\end{array}
\right.
\end{equation}
The integral $\varepsilon\mathbb{E}\int\nabla\chi_{\varepsilon}\cdot\nabla u_0 v^*_\varepsilon dx$ is estimated using integrations by parts as
\begin{equation}\label{right2}
\begin{aligned}
\left|\varepsilon\mathbb{E}\int\nabla\chi_{\varepsilon}\cdot\nabla u_0 v^*_\varepsilon dx\right|
&=\left|\varepsilon \mathbb{E}\int(\nabla\cdot(\chi_{\varepsilon}\nabla u_0)-\chi_{\varepsilon}\Delta u_0)v^*_\varepsilon  dx\right| \\
&\leq \left|\varepsilon \mathbb{E}\int \nabla {v}_{\varepsilon}^*\cdot\nabla u_0 \chi_{\varepsilon}dx \right|
+\left|\varepsilon\mathbb{E}\int \chi_{\varepsilon}\Delta u_0 v^*_\varepsilon   dx\right|\\
&\leq C\|\nabla v_{\varepsilon}\|_{L_2(\mathbb{R}^d;\mH)}\times\left\{
\begin{array}{ll}
\sqrt{\varepsilon},& d=3\\
\varepsilon\sqrt{|\ln\varepsilon|},& d=4\\
\varepsilon, & d>4.
\end{array}
\right.
\end{aligned}
\end{equation}
The first term on the right-hand side in \eqref{eq:estim} is bounded by
\begin{displaymath}
  \|v_\eps  \|_{H^1(\mathbb{R}^d;\mH)}\big\| (\rho-iV(\frac{x}{\varepsilon})\chi_{\varepsilon}(x))u_0\|_{H^{-1}(\mathbb{R}^d;\mH)}.
\end{displaymath}
Recalling that $G$ is the fundamental solution of $-\Delta+1$, we obtain that
\begin{equation}
\label{eq:rhsestim}
\big\| (\rho-iV(\frac{x}{\varepsilon})\chi_{\varepsilon}(x))u_0\|_{H^{-1}(\mathbb{R}^d;\mH)}=\big\| G*\big((\rho-iV(\frac{x}{\varepsilon})\chi_{\varepsilon}(x))u_0\big) \big\| _{H^{1}(\mathbb{R}^d;\mH)},
\end{equation}
since $-\Delta+1$ is an isomorphism from $H^1(\Rm^d)$ to $H^{-1}(\Rm^d)$.

Define $f_{\varepsilon}(x) = G*((\rho-iV(\frac{x}{\varepsilon})\chi_{\varepsilon}(x))u_0)$.  We show in the next section that $\|f_\eps\|_{H^1(\Rm^d;\mH)}$ is bounded by a constant times $\sqrt\eps$ in $d=3$, $\eps|\ln\eps|^{\frac12}$ in $d=4$ and $\eps$ in $d>4$. Note that $\rho=\lim_{\eps\to0}\E\{iV(\frac x\eps)\chi_\eps(x)\}$ so that $f_\eps$ is asymptotically mean-zero.
%
%

Collecting the previous bounds, we obtain that
\begin{equation}
\|u_\eps - u_0-\varepsilon u_1\|_{H^1(\mathbb{R}^d;\mH)} + \|u_\eps -u_0\|_{L^2(\mathbb{R}^d;\mH)} \leq C \left\{
\begin{array}{ll}
\sqrt{\varepsilon} & d=3\\
\varepsilon\sqrt{|\ln\varepsilon|} & d=4\\
\varepsilon & d>4.
\end{array}
\right.
\end{equation}
This concludes the proof of theorem \ref{thm:rate}.

\section{Estimation of fourth order moments}
\label{sec:fourth}
In this section we discuss the estimation of $\mathbb{E}\int|\nabla
f_{\varepsilon}|^2dx$ and $\mathbb{E}\int|f_{\varepsilon}|^2dx$
when the potential $V$ satisfies \eqref{eq:estim4th}. Following \cite{HPP-SPDE-13}, we first recall that the latter estimate holds for a large class of sufficiently mixing coefficients.
\begin{definition}
For any $r>0$, $\gamma(r)$ is the smallest value such that the bound
\begin{equation}
\mathbb{E}(\phi_1(V)\phi_2(V))\leq
\gamma(r)\sqrt{\mathbb{E}\phi_1^2(V)\mathbb{E}\phi_2^2(V)},
\end{equation}
holds for any two compact sets $K_1$, $K_2$ such that
\begin{equation}
d(K_1,K_2) = \inf_{x_1\in K_1,x_2\in K_2}(|x_1-x_2|)\geq r,
\end{equation}
for any two random variables $\phi_i(V)$ such that $\phi_i(V)$ is
$\mathcal{F}_{K_i}$-measurable and $\mathbb{E}\phi_i(V)=0$.
\end{definition}

It is shown in \cite{HPP-SPDE-13} that \eqref{eq:estim4th} holds for 
a function $\eta:\ \mathbb{R}_{+}\rightarrow \mathbb{R}_{+}$ defined by
\begin{equation}
\eta(r)=\sqrt{K\gamma(r/3)},\qquad \text{with}\
K=4(\|V(x)\|_2\|V^3(x)\|_2+\|V^2(x)\|_2^2).
\end{equation}
Note that when $V(\cdot)$ is a Gaussian random field, inequality
\eqref{eq:estim4th} becomes an equality with $\eta$ replaced by
$R$. We assume that $\eta\in L^1(\mathbb{R}^d)$, and hence that $\sqrt\gamma\in L^1(\Rm^d)$ for the following
estimation to hold.

We have the following decomposition for $\|\nabla f_{\varepsilon}\|_{L^2(\Rm^d;\mH)}^2$
\begin{equation}
    \label{grad_4thmoment}
\begin{aligned}
        &\mathbb{E}\int|\nabla f_{\varepsilon}|^2dx\\
        =&\mathbb{E}\left|\rho\int \nabla G(x,y)u_0(y)dy-\frac{1}{\varepsilon^2}\int\int \nabla G(x,y)V(\frac{y}{\varepsilon})G(y,z)V(\frac{z}{\varepsilon})u_0(y)dydz\right|^2dx\\
        =&\frac{1}{\varepsilon^4}\mathbb{E}\int \nabla G(x,y_1)G(y_1,z_1) \nabla G(x,y_2)G(y_2,z_2)V(\frac{y_1}{\varepsilon})V(\frac{z_1}{\varepsilon})V(\frac{y_2}{\varepsilon})V(\frac{z_2}{\varepsilon})u_0(y_1)u_0(y_2)d{\cal Y}\\
        -&\frac{1}{\varepsilon^4}\int \nabla G(x,y_1)G(y_1,z_1) \nabla G(x,y_2)G(y_2,z_2)R(\frac{y_1-z_1}{\varepsilon})R(\frac{y_2-z_2}{\varepsilon})u_0(y_1)u_0(y_2) d{\cal Y}\\
    \leq &\frac{1}{\varepsilon^4}\int \nabla G(x,y_1)G(y_1,z_1) \nabla G(x,y_2)G(y_2,z_2)\eta(\frac{y_1-y_2}{\varepsilon})\eta(\frac{z_1-z_2}{\varepsilon})u_0(y_1)u_0(y_2)d{\cal Y}\\
    +&\frac{1}{\varepsilon^4}\int \nabla G(x,y_1)G(y_1,z_1) \nabla G(x,y_2)G(y_2,z_2)\eta(\frac{y_1-z_2}{\varepsilon})\eta(\frac{y_2-z_1}{\varepsilon})u_0(y_1)u_0(y_2)d{\cal Y}\\
    :=&(I)+(II)\\
\end{aligned}
\end{equation}
with $d{\cal Y}=dy_1 dy_2 dz_1 dz_2dx$.\\
{\itshape Estimation of (I).} Changing variables $y_i$ and $z_i$
to $x-y_i$ and $x-y_i-z_i$ for $i=1,\ 2$ gives
\begin{equation}
    \begin{aligned}
        |(I)|\leq &C\frac{1}{\varepsilon^4}\int \nabla G(y_1) G(z_1) \nabla G(y_2) G(z_2)|\eta(\frac{y_1-y_2}{\varepsilon})||\eta(\frac{y_1-y_2}{\varepsilon}-\frac{z_1-z_2}{\varepsilon})|\\
        &\qquad \qquad \times |u_0(x-y_1)||u_0(x-y_2)|dy_1dy_2dz_1dz_2dx.
\end{aligned}
\end{equation}
Using $u_0$ to integrate in $x$, we then have
\begin{equation}
    \begin{aligned}
    |(I)|\leq &C\frac{1}{\varepsilon^4}\int \nabla G(y_1)G(z_1)\nabla G(y_2) G(z_2)|\eta(\frac{y_1-y_2}{\varepsilon})||\eta(\frac{y_1-y_2}{\varepsilon}-\frac{z_1-z_2}{\varepsilon})|dy_1dy_2dz_1dz_2.
\end{aligned}
\end{equation}
Changing variables $y_2$ and $z_2$ to $y_1-y_2$ and $z_1-z_2$, and using \eqref{eq:bds} yields
\begin{equation}
\begin{aligned}
    |(I)|\leq &C\frac{1}{\varepsilon^4}\int \frac{\exp(-\nu|y_1|)}{|y_1|^{d-1}}\frac{\exp(-\nu|z_1|)}{|z_1|^{d-2}}\frac{\exp(-\nu|y_1-y_2|)}{|y_1-y_2|^{d-1}}\frac{\exp(-\nu|z_1-z_2|)}{|z_1-z_2|^{d-2}}\\
    &\hspace{2cm}\times |\eta(\frac{y_2}{\varepsilon})||\eta(\frac{y_2}{\varepsilon}-\frac{z_2}{\varepsilon})|dy_1dy_2dz_1dz_2.
\end{aligned}
\end{equation}
Now we may apply Lemma \ref{Wenjia_Lemma} to integrate in $y_1$ and
$z_1$:
\begin{eqnarray}
\int\frac{\exp(-\nu|y_1|)}{|y_1|^{d-1}}\frac{\exp(-\nu|y_1-y_2|)}{|y_1-y_2|^{d-1}}dy_1 \leq C\exp(-\nu |y_2|)(1+|y_2|^{-(d-2)}),\\
\int\frac{\exp(-\nu|z_1|)}{|z_1|^{d-2}}\frac{\exp(-\nu|z_1-z_2|)}{|z_1-z_2|^{d-2}}dz_1
\leq \left\{
\begin{array}{ll}
C\exp(-\nu z_2), & d=3\\
C\exp(-\nu z_2)(1+\log|z_2|), & d=4\\
C\exp(-\nu z_2)(1+|z_2|^{-(d-4)}), & d>4.
\end{array}
\right.
\end{eqnarray}
This estimate of (I) can be recast as
\begin{equation}
\begin{aligned}
|(I)|\leq &C\frac{1}{\varepsilon^4}\int\int dy_2dz_2\exp(-\nu|y_2|)(1+|y_2|^{-(d-2)}) \exp(-\nu|z_2|) |\eta(\frac{y_2}{\varepsilon})||\eta(\frac{y_2-z_2}{\varepsilon})|\\
&\times\left\{
\begin{array}{ll}
1, & d=3 \\
\log(|z_2|), & d=4 \\
(1+|z_2|^{-(d-4)}), & d>4.
\end{array}
\right.
\end{aligned}
\end{equation}
It remains to integrate in $y_2$ and $z_2$ to obtain
\begin{equation}\label{estI}
|(I)| \sim\left\{
\begin{array}{ll}
    O(\varepsilon), &d=3\\
    O(\varepsilon^2|\log\varepsilon|), &d=4\\
    O(\varepsilon^2), &d>4.
\end{array}
\right.
\end{equation}
{\itshape Estimation of (II).} After changing variables $y_i$ and
$z_i$ to $x-y_i$ and $x-y_i-z_i$ for $i=1,\ 2$, and integrating in
$x$ using $u_0$, we have
\begin{equation}
    \begin{aligned}
    |(II)|\leq &C\frac{1}{\varepsilon^4}\int \nabla G(y_1)G(z_1)\nabla G(y_2) G(z_2)|\eta(\frac{-y_1+y_2+z_2}{\varepsilon})||\eta(\frac{-y_2+y_1+z_1}{\varepsilon})|d{\cal Y}
\end{aligned}
\end{equation}
with $d{\cal Y}=dy_1dy_2dz_1dz_2$. Changing variable $y_2$ to $y_1-y_2$ and using 
\eqref{eq:bds} gives
\begin{equation}
\begin{aligned}
    |(II)|\leq &C\frac{1}{\varepsilon^4}\int \frac{\exp(-\nu|y_1|)}{|y_1|^{d-1}}\frac{\exp(-\nu|z_1|)}{|z_1|^{d-2}}\frac{\exp(-\nu|y_1-y_2|)}{|y_1-y_2|^{d-1}}\frac{\exp(-\nu|z_2|)}{|z_2|^{d-2}}\\
    &\hspace{2cm} |\eta(\frac{z_2-y_2}{\varepsilon})||\eta(\frac{z_1+y_2}{\varepsilon})|dy_1dy_2dz_1dz_2.
\end{aligned}
\end{equation}
We now integrate in $y_1$ and $z_1$:
\begin{equation}
\begin{aligned}
 \int\frac{\exp(-\nu|y_1|)}{|y_1|^{d-1}}\frac{\exp(-\nu|y_1-y_2|)}{|y_1-y_2|^{d-1}}dy_1 &\leq C\exp(-\nu |y_2|)(1+|y_2|^{-(d-2)}),\\
 \int\frac{\exp(-\nu|z_1|)}{|z_1|^{d-2}}|\eta(\frac{z_1+y_2}{\varepsilon})|dz_1 &\leq C\varepsilon^2.
\end{aligned}
\end{equation}
The estimate is then recast as
\begin{equation}
|(II)|\leq C\frac{1}{\varepsilon^2}\int\int \exp(-\nu
|y_2|)(1+|y_2|^{-(d-2)})\frac{\exp(-\nu|z_2|)}{|z_2|^{d-2}}||\eta(\frac{z_2-y_2}{\varepsilon})|dy_2dz_2.
\end{equation}
Changing variable $z_2$ to $y_2-z_2$, and integrating in $y_2$ using
Lemma \ref{Wenjia_Lemma} yields
\begin{equation}
\begin{aligned}
|(II)|\leq &C\frac{1}{\varepsilon^2}\int dz_2\exp(-\nu|z_2|) |\eta(\frac{z_2}{\varepsilon})|
&\times\left\{
\begin{array}{ll}
1, & d=3 \\
\log(|z_2|), & d=4 \\
(1+|z_2|^{-(d-4)}), & d>4.
\end{array}
\right.
\end{aligned}
\end{equation}
It remains to integrate in $z_2$ to obtain
\begin{equation}\label{estII}
|(II)| \sim\left\{
\begin{array}{ll}
    O(\varepsilon), &d=3\\
    O(\varepsilon^2|\log\varepsilon|), &d=4\\
    O(\varepsilon^2), &d>4.
\end{array}
\right.
\end{equation}
Collecting \eqref{estI} and \eqref{estII}, we find that
\begin{equation}
 \mathbb{E}\int |\nabla f_{\varepsilon}|^2 dx \sim \left\{
\begin{array}{ll}
O(\varepsilon) & d=3\\
O(\varepsilon^2|\log\varepsilon|) & d=4\\
O(\varepsilon^2) & d>4.
\end{array}
\right.
\end{equation}
The estimate of $\mathbb{E}\int | f_{\varepsilon}|^2 dx$ can be
obtained by replacing $\nabla G$ by $G$ in \eqref{grad_4thmoment}
and estimating every term in the same way. The result is
\begin{equation}
 \label{4thmoment}
 \mathbb{E}\int |f_{\varepsilon}|^2 dx \sim \left\{
\begin{array}{ll}O(\varepsilon^2) & d=3\\

O(\varepsilon^4|\log\varepsilon|^2) & d=4\\
O(\varepsilon^4) & d>4.
\end{array}
\right.
\end{equation}
This concludes the proof of theorem \ref{thm:rate}.

\section*{Acknowledgments}
The authors would like to thank Yu Gu or multiple discussions on the homogenization of equations with random potentials. This work was partially funded by AFOSR Grant NSSEFF- FA9550-10-1-0194 and NSF Grant DMS-1108608.
\appendix
\section{Appendix}
The following lemma is proved in \cite{BJ-CMS-11}.
\begin{lemma}\label{Wenjia_Lemma}
Let us fix two distinct points $x,y\in \mathbb{R}^d$. Let $\alpha$, $\beta$ be positive numbers in $(0,d)$, and $\lambda$ another positive number. We have the following convolution results.
\begin{equation}
\int_{\mathbb{R}^d} \frac{e^{-\lambda|z-x|}}{|z-x|^{\alpha}}\frac{e^{-\lambda|z-y|}}{|z-y|^{\beta}}dz \leq \left\{
\begin{array}{ll}
C\exp(-\lambda|x-y|)(|x-y|^{d-(\alpha+\beta)}+1), & \text{if}\ \alpha+\beta>d;\\
C\exp(-\lambda|x-y|)(|\log|x-y||+1), & \text{if}\ \alpha+\beta=d;\\
C\exp(-\lambda|x-y|), & \text{if}\ \alpha+\beta<d.
\end{array}
\right.
\end{equation}
The above constants depend only on the diam(X), $\alpha$, $\beta$, $\lambda$, and dimension $d$ but not on $|x-y|$.
\end{lemma}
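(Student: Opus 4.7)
The proof reduces to an elementary partition-and-estimate argument once one notices that the triangle inequality $|z-x|+|z-y| \geq |x-y|$ allows the global exponential prefactor $e^{-\lambda|x-y|}$ to be pulled out, leaving a purely polynomial estimate to carry out. My plan is to normalize by the rescaling $z\mapsto z/\lambda$, which reduces to the case $\lambda=1$ (the exponents $\alpha,\beta$ being invariant and the constants picking up harmless factors of $\lambda$), and translate so that $x=0$, writing $r=|y|$. Factoring
$$e^{-|z|}e^{-|z-y|} \;=\; e^{-r}\,e^{-(|z|+|z-y|-r)},$$
reduces the task to bounding the residual integral by the stated power or logarithm of $r$.

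Next I would partition $\Rm^d$ into four regions: $A=\{|z|\leq r/2\}$, $B=\{|z-y|\leq r/2\}$, $C_1=\{r/2<|z|\leq 2r,\,|z-y|>r/2\}$, and $C_2=\{|z|>2r,\,|z-y|>r/2\}$. In $A$ the factor $|z-y|^{-\beta}$ is at most $Cr^{-\beta}$, the exponential residual $e^{-(|z|+|z-y|-r)}$ is at most $1$, and integrating $|z|^{-\alpha}$ over the ball of radius $r/2$ yields $Cr^{d-\alpha}$, for a total of $Cr^{d-\alpha-\beta}$; the region $B$ is symmetric. On $C_1$, the rescaling $z=rw$ makes the integrand bounded on a fixed compact domain in $w$ (its two singularities at $w=0$ and $w=y/r$ are excluded), again producing $Cr^{d-\alpha-\beta}$. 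On $C_2$ one has $|z-y|\sim |z|$ together with the elementary lower bound $|z|+|z-y|-r\geq |z|/2$, and the integral collapses to a one-dimensional radial integral of the form $\int_{2r}^{\infty} s^{d-1-\alpha-\beta}\,e^{-s/2}\,ds$.

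The three cases of the lemma read off directly from the behavior of this last radial integral as $r\to 0$: it is bounded when $\alpha+\beta<d$, of order $|\log r|$ when $\alpha+\beta=d$ (from the $ds/s$ singularity at the lower endpoint $2r$), and of order $r^{d-\alpha-\beta}$ when $\alpha+\beta>d$. The only bookkeeping subtlety, which I expect to be the main point to verify, is that the common $Cr^{d-\alpha-\beta}$ contributions from $A$, $B$, $C_1$ are absorbed into the sharper claims for $\alpha+\beta\leq d$; this is immediate because $r^{d-\alpha-\beta}\leq 1$ for $r\leq 1$ when $\alpha+\beta\leq d$, while for $r\geq 1$ the prefactor $e^{-\lambda r}$ kills any polynomial growth in $r$. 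Undoing the $\lambda$-rescaling then yields the three stated bounds with a constant depending only on $\alpha,\beta,\lambda,d$.
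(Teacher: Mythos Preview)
The paper does not supply its own proof of this lemma; it records the statement in the appendix and refers to \cite{BJ-CMS-11}. So there is no in-paper argument to compare against.

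Your approach is the standard one and is correct in substance: extract $e^{-\lambda|x-y|}$ via the triangle inequality, then split the remaining convolution according to proximity to the two poles and handle the far region radially. One small imprecision deserves comment. In the case $\alpha+\beta<d$, your regions $A$, $B$, $C_1$ each contribute a factor $r^{d-\alpha-\beta}$ \emph{after} $e^{-\lambda r}$ has already been factored out, so for large $r$ your bound reads $C\,r^{d-\alpha-\beta}e^{-\lambda r}$ rather than $C e^{-\lambda r}$. The phrase ``the prefactor $e^{-\lambda r}$ kills any polynomial growth'' does not close this gap: it gives $Ce^{-\lambda' r}$ for any $\lambda'<\lambda$, but not the stated exponential rate. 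This is precisely why the lemma allows the constant to depend on $\mathrm{diam}(X)$: once $r\leq\mathrm{diam}(X)$ the polynomial factor is absorbed into $C$. You should either invoke that hypothesis explicitly at the end (and then your final constant does depend on $\mathrm{diam}(X)$, contrary to your last sentence), or weaken the exponential to $e^{-\lambda' r}$, which is harmless for every application of the lemma in the paper.
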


%
%
%
%
%
%
%
%
%
%
%
%


\end{document}